\newtheorem{theorem}{Theorem}[section]
\newtheorem*{theorem*}{Theorem}
\newtheorem{lemma}{Lemma}[section]
\newtheorem{proposition}{Proposition}[section]
\newtheorem{corollary}{Corollary}[section]
\theoremstyle{remark}
\newtheorem{example}{Example}[section]
\newtheorem{remark}{Remark}[section]
\newcommand{\CC}{\mathds{C}}
\newcommand{\NN}{\mathds{N}}
\begin{document}
\title{Complex valued multiplicative functions with bounded partial sums}
\author{Marco Aymone}

\begin{abstract} We present a class of multiplicative functions $f:\mathbb{N}\to\mathbb{C}$ with bounded partial sums. The novelty here is that our functions do not need to have modulus bounded by $1$. The key feature is that they pretend to be the constant function $1$ and that for some prime $q$, $\sum_{k=0}^\infty \frac{f(q^k)}{q^k}=0$. These combined with other conditions guarantee that these functions are periodic and have sum equal to zero inside each period. Further, we study the class of multiplicative functions $f=f_1\ast f_2$, where each $f_j$ is multiplicative and periodic with bounded partial sums. We show an omega bound for the partial sums $\sum_{n\leq x}f(n)$ and an upper bound that is related with the error term in the classical Dirichlet divisor problem.
\end{abstract}
\maketitle
\section{Introduction.}
We say that $f:\NN\to\CC$ is \textit{multiplicative} if $f(nm)=f(n)f(m)$ whenever $n$ and $m$ are relatively prime, and we say that such $f$ is \textit{completely multiplicative} if this relation holds for all $n$ and $m$. Therefore, a multiplicative function $f$ is determined by its values at prime powers. 

We say that $f:\NN\to\CC$ has bounded partial sums if there exists a constant $C>0$ such that for all $x\geq 1$, $|\sum_{n\leq x}f(n)|\leq C$; otherwise we say that $f$ has unbounded partial sums.

Resolving the Erd\H{o}s discrepancy problem,  Tao \cite{taodiscrepancy} showed that a complex valued completely multiplicative function $f$ with $|f|=1$ has unbounded partial sums. Further, Tao gave a partial classification of all multiplicative functions $f$ taking only values $\pm 1$ with bounded partial sums. To state this partial classification, we need to introduce the language of pretentious number theory \cite{granvillepretentious}: Given two complex valued multiplicative functions $f$ and $g$ taking values in the unit disk, we say that $f$ \textit{pretends} to be $g$ or that $f$ is $g$-\textit{pretentious} if the ``distance'' between $f$ and $g$ given by
\begin{equation*}
\mathbb{D}(f,g;x):=\left(\sum_{p\leq x}\frac{1-Re(f(p)\overline{g(p)})}{p}\right)^{1/2}
\end{equation*} 
is $O(1)$ as $x\to\infty$, where in the sum above $p$ stands for a generic prime number.

The multiplicative function $f:\NN\to\{-1,1\}$ such that $f(2^k)=-1$ for all $k\geq 1$ and $f(p^k)=1$ for all primes $p\geq 3$ and all powers $k\geq 1$, then $f$ is the periodic function $f(n)=(-1)^{n+1}$ which clearly has bounded partial sums. In \cite{taodiscrepancy}, Tao showed that if $f:\NN\to\{-1,1\}$ is multiplicative and has bounded partial sums, then $f$ is $1$-pretentious and at powers of $2$, $f(2^k)=-1$ for all $k\geq 1$. Later, Klurman \cite{klurmancorrelation} completely classified such multiplicative functions with bounded partial sums by proving that they must be periodic of some period $m$ and $\sum_{n=1}^m f(n)=0$. This last result had been known as the Erd\H{o}s-Coons-Tao conjecture.

When we allow that a multiplicative function $f$ takes complex values, then there is no known criterion to determine when $f$ has bounded partial sums, therefore we must analyze case by case. For instance, in \cite{aymonediscrepancy} and \cite{klurmanteravainen} it has been proved that a multiplicative function $f$ supported on the squarefree integers such that at primes $f(p)=\pm1$, has unbounded partial sums. On the other hand, without any restriction we can easily construct examples of multiplicative functions $f:\NN\to\CC$ with bounded partial sums. A non-trivial way to construct such examples exists if we impose conditions on the values $f(p)$ at primes $p$ such that $\sum_{n\leq x}|f(n)|$ is bounded below by $cx$ for all sufficiently large $x$, for some positive constant $c$. Here we aim to do this.

\begin{theorem}\label{teorema condicao necessaria soma zero} Assume that $f:\NN\to\CC$ is multiplicative, has bounded partial sums and $\sum_{p}\frac{|1-f(p)|}{p}<\infty$. Then there exists a prime $q$ such that
\begin{equation}\label{equacao sum zero}
\sum_{k=0}^\infty \frac{f(q^k)}{q^k}=0.
\end{equation}
\end{theorem}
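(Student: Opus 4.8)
The plan is to work with Dirichlet series and exploit the interplay between the partial sums of $f$ being bounded and the pretentiousness hypothesis $\sum_p \frac{|1-f(p)|}{p} < \infty$. Set $F(s) = \sum_{n=1}^\infty \frac{f(n)}{n^s}$. The boundedness of the partial sums $S(x) = \sum_{n\le x} f(n)$ means (by partial summation) that $F(s)$ converges for $\mathrm{Re}(s) > 0$ and moreover $(s-1)F(s) \to 0$ as $s \to 1^+$ — in fact $F(s)$ extends continuously to the closed half-plane $\mathrm{Re}(s) \ge 0$ minus possibly trivial issues, and in particular $F(s)$ stays bounded as $s \to 1^+$. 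Meanwhile, the Euler product gives $F(s) = \prod_p F_p(s)$ where $F_p(s) = \sum_{k=0}^\infty \frac{f(p^k)}{p^{ks}}$.

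**Next I would** compare $F$ to the Riemann zeta function. Write $F(s) = \zeta(s) G(s)$ where $G(s) = \prod_p \left(1 - \frac{1}{p^s}\right) F_p(s)$. The hypothesis $\sum_p \frac{|1-f(p)|}{p} < \infty$ should force the product defining $G(s)$ to converge nicely near $s = 1$: each local factor is $\left(1-\frac{1}{p^s}\right)\left(1 + \frac{f(p)}{p^s} + \cdots\right) = 1 + \frac{f(p)-1}{p^s} + O\!\left(\frac{1}{p^{2\sigma}}\right)$, so $\log G(s) = \sum_p \frac{f(p)-1}{p^s} + (\text{absolutely convergent})$, and since $\sum_p \frac{|1-f(p)|}{p} < \infty$ this converges absolutely even at $s=1$. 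Hence $G(s)$ extends continuously and non-vanishingly... wait — not necessarily non-vanishingly, but at least continuously to a neighborhood of $s=1$ on the real axis with a finite limit $G(1)$. Now $\zeta(s)$ has a simple pole at $s=1$, so for $F(s) = \zeta(s)G(s)$ to remain bounded as $s \to 1^+$ we are forced to have $G(1) = 0$.

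**The key step** is then to extract from $G(1) = 0$ the conclusion that some single local factor vanishes. Here is the subtlety: $G(1) = \prod_p \left(1 - \frac1p\right)F_p(1)$, and $G(1) = 0$ means this infinite product is zero. An infinite product $\prod (1 + a_p)$ of nonzero terms with $\sum |a_p| < \infty$ cannot be zero; so either some factor $\left(1 - \frac1p\right)F_p(1)$ is exactly zero — which, since $1 - \frac1p \ne 0$, gives $F_q(1) = \sum_{k=0}^\infty \frac{f(q^k)}{q^k} = 0$ for that prime $q$, exactly \eqref{equacao sum zero} — or the product fails to converge to a nonzero value for a non-absolute reason. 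I expect the main obstacle to be ruling out this second possibility, i.e.\ controlling the tail of $\sum_p \bigl|\left(1-\frac1p\right)F_p(1) - 1\bigr|$; one needs that $\sum_p \frac{|1-f(p)|}{p} < \infty$ together with convergence of $F_p(1)$ (which requires $|f(p^k)|$ not to grow too fast — a consequence of bounded partial sums, via the structure of $f$ at prime powers, though this may need a separate argument bounding $f(p^k)$) indeed makes $\sum_p |a_p| < \infty$ where $a_p = \left(1-\frac1p\right)F_p(1) - 1$.

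**To finish**, once we know $\sum_p |a_p| < \infty$, the vanishing of the infinite product $\prod_p (1 + a_p) = 0$ forces $1 + a_p = 0$ for some $p$, i.e.\ $F_p(1) = 0$, giving the desired prime $q$. I would organize the write-up as: (i) boundedness of partial sums $\Rightarrow$ $F(s)$ bounded near $s=1^+$; (ii) factor $F = \zeta \cdot G$ and show $G$ extends continuously to $s=1$ with the local product absolutely convergent there, using the pretentiousness hypothesis (and an auxiliary bound on $|f(p^k)|$ if needed); (iii) conclude $G(1)=0$ from the pole of $\zeta$; (iv) deduce that some local Euler factor at a prime $q$ vanishes, which is precisely \eqref{equacao sum zero}. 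The delicate point throughout is making sure the prime powers $f(p^k)$ are controlled enough for all these series and products to converge absolutely.
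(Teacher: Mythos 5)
Your proposal is correct and follows essentially the same route as the paper: write $F(s)=\zeta(s)G(s)$, use $\sum_p|1-f(p)|/p<\infty$ to make the product for $G$ converge absolutely up to $s=1$, let the pole of $\zeta$ force $G(1)=0$, and conclude that a single Euler factor must vanish (the paper merely splits the product at a finite threshold $M$ so that the vanishing factor is localized among finitely many primes). The auxiliary bound on $|f(p^k)|$ that you flag is exactly the paper's Lemma \ref{lema f eh O(1)} and follows in one line from $|f(n)|\leq|\sum_{m\leq n}f(m)|+|\sum_{m\leq n-1}f(m)|\leq 2C$, so your outline closes.
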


\begin{remark} We impose the condition $\sum_{p}\frac{|1-f(p)|}{p}<\infty$ to keep the intuiton behind pretentiouness in the case that $f$ takes values outside the unit disk. For example, if $p_n$ is the $n$-th prime and $f(p_n)=1+(-1)^n\in\{0,2\}$, the partial sums $\sum_{p\leq x}\frac{1-f(p)}{p}$ are $O(1)$ as $x\to\infty$ while the values $f(p)$ are always distant from $1$.
\end{remark}

It is interesting to observe that if $f$ is real-valued and $f^2\leq 1$, then, since $f(1)=1$, \eqref{equacao sum zero} can only be satisfied when $q=2$ and $f(2^k)=-1$ for all $k\geq 1$. But we have many options to satisfy \eqref{equacao sum zero} when we allow that $f$ takes complex values. 

\begin{theorem}\label{teorema exemplos} If a multiplicative function $f:\NN\to\CC$ has period $m$, $f(m)\neq 0$ and has bounded partial sums, then the following three conditions are satisfied.\\
i. For some prime $q|m$, $\sum_{k=0}^\infty \frac{f(q^k)}{q^k}=0.$\\
ii. For each $p^a\| m$, $f(p^k)=f(p^a)$ for all $k\geq a$.\\
iii. For each $\gcd(p,m)=1$, $f(p^k)=1$, for all $k\geq 1$.\\
Conversely, if $f:\NN\to\CC$ is multiplicative and the three conditions above are satisfied, then $f$ has period $m$ and has bounded partial sums.
\end{theorem}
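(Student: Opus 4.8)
The plan is to prove both implications from two elementary facts. \emph{Fact (a):} a function $g:\NN\to\CC$ of period $m$ has bounded partial sums if and only if $\sum_{n=1}^{m}g(n)=0$; indeed, writing $x=m\lfloor x/m\rfloor+r$ with $0\le r<m$ gives $\sum_{n\le x}g(n)=\lfloor x/m\rfloor\sum_{n=1}^{m}g(n)+\sum_{n=1}^{r}g(n)$, and the second term takes only finitely many values. \emph{Fact (b):} if $f$ is multiplicative and satisfies conditions ii and iii, then multiplicativity collapses to $f(n)=\prod_{p\mid m}f\big(p^{\min(v_p(n),\,v_p(m))}\big)$, since each prime $p\nmid m$ contributes a factor $1$; in particular $f(n)$ depends only on $n\bmod m$, so $f$ has period $m$.

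For the forward implication, I would first observe that $f(m)=\prod_{p^a\|m}f(p^a)\neq0$ forces every $f(p^a)\neq 0$, and that periodicity gives $f(jm)=f(m)$ for all $j\ge 1$. Conditions ii and iii then follow using only periodicity and $f(m)\neq0$: if $\gcd(p,m)=1$ then $f(p^k)f(m)=f(p^km)=f(m)$, so $f(p^k)=1$; and if $p^a\|m$, writing $m=p^am'$ with $\gcd(p,m')=1$ (so $f(m')\neq0$ because $f(m)=f(p^a)f(m')$), then for $k\ge a$ the number $p^km'=p^{k-a}m$ is a multiple of $m$, whence $f(p^k)f(m')=f(p^km')=f(m)$ and $f(p^k)=f(m)/f(m')=f(p^a)$. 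For condition i, Fact (a) gives $\sum_{n=1}^m f(n)=0$, while Fact (b) together with the Chinese Remainder Theorem yields
\[
\sum_{n=1}^{m}f(n)=\prod_{p\mid m}\ \sum_{r\bmod p^{a}}f\big(p^{\min(v_p(r),a)}\big),\qquad a=v_p(m).
\]
A routine count of the residues $r\in\{0,\dots,p^a-1\}$ by the value of $v_p(r)$, combined with condition ii to resum the geometric tail, evaluates the inner sum as $p^{a}(1-p^{-1})\sum_{k\ge0}f(p^k)p^{-k}$, so that
\[
\sum_{n=1}^{m}f(n)=\varphi(m)\prod_{p\mid m}\sum_{k=0}^{\infty}\frac{f(p^k)}{p^k}.
\]
Since $\varphi(m)\neq0$, this being zero forces $\sum_{k\ge0}f(q^k)/q^k=0$ for some prime $q\mid m$, which is condition i.

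For the converse, Fact (b) already gives that $f$ has period $m$, using only conditions ii and iii. The identity displayed above then holds, and condition i makes one factor of the product vanish, so $\sum_{n=1}^m f(n)=0$; by Fact (a), $f$ has bounded partial sums.

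The only computational point is the residue count that produces $\sum_{n=1}^m f(n)=\varphi(m)\prod_{p\mid m}\sum_k f(p^k)p^{-k}$, and the main thing to be careful about is that $\min(v_p(n),v_p(m))$ genuinely depends only on $n\bmod p^{v_p(m)}$ (with the natural convention when $p^{v_p(m)}\mid n$), so that the collapse in Fact (b) is legitimate, and that condition ii is exactly what resums the tail $\sum_{k\ge a}f(p^k)p^{-k}$. As an alternative to Fact (a) in the forward direction, one can note that conditions ii and iii factor the Dirichlet series as $\sum_n f(n)n^{-s}=\zeta(s)\prod_{p\mid m}\big[(1-p^{-s})\sum_{k<a}f(p^k)p^{-ks}+f(p^a)p^{-as}\big]$, an entire function times $\zeta(s)$; bounded partial sums force this to be analytic at $s=1$, so the finite product must vanish there, which upon evaluation at $s=1$ is again condition i. I would still favour the elementary route, since it handles both directions at once.
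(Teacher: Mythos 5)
Your proof is correct and follows essentially the same route as the paper: deduce ii and iii from periodicity and $f(m)\neq 0$, establish the identity $\sum_{n=1}^m f(n)=\varphi(m)\prod_{p\mid m}\sum_{k\ge0}f(p^k)p^{-k}$, and use it in both directions together with the fact that a periodic function has bounded partial sums iff it sums to zero over a period. The only cosmetic difference is that you factor the period sum via CRT while the paper groups terms by $\gcd(n,m)=d$ to get $f\ast\varphi(m)$; these are the same computation.
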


An intermediate step in the proof of the Erd\H{o}s-Coons-Tao conjecture \cite{klurmancorrelation} is a result similar to Theorem \ref{teorema exemplos}  -- Proposition 4.4 of \cite{klurmancorrelation}, where it is assumed that $f^2\leq 1$. Our contribution here is the observation that the proof of Proposition 4.4 of \cite{klurmancorrelation} allow us to deal with the case where $|f|$ is not necessarily bounded by $1$.

 We stress that the condition that $f$ does not vanish at its period, $f(m)\neq 0$, is pivotal to deduce the three conditions above. Indeed, a non-principal Dirichlet character is a classical example of a periodic (completely) multiplicative function with bounded partial sums that vanishes at its period, and does not satisfy either \textit{i}. and \textit{iii}. However, the three conditions above allow us to produce examples of periodic multiplicative functions with bounded partial sums, despite the fact that $f$ vanishes or not at its period.

\begin{example}
Let $f$ be multiplicative and define for all primes $p\neq 3$, $f(p^k)=1$ for all powers $k\geq 1$, and at powers of $3$: $f(3)=2$, $f(9)=-15$ and $f(3^k)=0$ for all $k\geq 3$. Then $f$ has period $27$, $f(27)=0$, and has bounded partial sums.
\end{example}

\begin{example} Let $f$ be multiplicative and define for all primes $p\neq 5$, $f(p^k)=1$ for all powers $k\geq 1$, and at powers of $5$: $f(5)=\pi$, $f(5^k)=-20-4\pi$ for all $k\geq 2$. Then $f$ has period $25$, $f(25)\neq0$, and has bounded partial sums.

\end{example}

We point out that our class of examples in Theorem \ref{teorema exemplos} is not the only one with bounded partial sums. Indeed we can construct very easily examples of non-periodic multiplicative functions with bounded partial sums by a standard convolution argument: If $g:\NN\to\CC$ is multiplicative and $\sum_{n=1}^\infty |g(n)|<\infty$, and if $h:\NN\to\CC$ has bounded partial sums, then $f=g\ast h$ also has bounded partial sums, where $\ast$ stands for Dirichlet convolution. In particular, $h$ can be as in Theorem \ref{teorema exemplos} or a non-principal Dirichlet character $\chi$.
 
Now we turn our attention to multiplicative functions $f:\NN\to\CC$ of the form $f=f_1\ast f_2$, where each $f_j$ is multiplicative and periodic with bounded partial sums. We begin by observing that if each $f_j$ satisfies the conditions i-iii of Theorem \ref{teorema exemplos}, then $f$ has unbounded partial sums.  

Before we state our next result, we recall the notation $f(x)=\Omega(g(x))$, where $g(x)>0$ for all $x>0$. This means that $\limsup_{x\to\infty}\frac{|f(x)|}{g(x)}>0$.
 
\begin{theorem}\label{teorema omega} Let $f_1$ and $f_2$ be two multiplicative functions satisfying conditions i-iii of Theorem \ref{teorema exemplos}. Let $f=f_1\ast f_2$. Then there exists a constant $d>0$ such that
$$\sum_{n\leq x}f(n)=\Omega\left(\exp\left(d \frac{\log x}{\log\log x} \right) \right). $$
\end{theorem}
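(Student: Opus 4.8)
The plan is to exploit that, away from the finitely many primes dividing the periods of $f_1$ and $f_2$, the convolution $f=f_1\ast f_2$ is \emph{exactly} the divisor function, which is nonnegative and occasionally very large. Write $m_j$ for the period of $f_j$ (well defined by Theorem~\ref{teorema exemplos}) and set $M=m_1m_2$. If $\gcd(n,M)=1$ then every divisor $d\mid n$ is coprime to both $m_1$ and $m_2$, so condition iii gives $f_1(d)=f_2(d)=1$ for all such $d$, and hence $f(n)=\sum_{d\mid n}f_1(d)f_2(n/d)=\sum_{d\mid n}1=\tau(n)$, where $\tau$ denotes the number-of-divisors function. Thus $f(n)=\tau(n)\geq 1$ on the set $\{n:\gcd(n,M)=1\}$, and there $f$ attains the full size of $\tau$. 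Note that only condition iii is used for this; conditions i and ii are not needed for the present theorem.

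I would then convert this pointwise largeness into a lower bound for $D(x):=\sum_{n\leq x}f(n)$ via the telescoping identity $f(x)=D(x)-D(x-1)$, valid for every positive integer $x$, which gives $\sup_{y\leq x}|D(y)|\geq\tfrac12|f(x)|$. It therefore suffices to produce, for arbitrarily large $x$, an integer $n\leq x$ with $f(n)$ large. For this take the ``$M$-free primorial'' $n_t=\prod_{p\leq t,\ p\nmid M}p$: it is squarefree and coprime to $M$, so by the first paragraph $f(n_t)=\tau(n_t)=2^{\omega(n_t)}$, while by Chebyshev's estimates $\log n_t\asymp t$ (so $t\asymp\log n_t$ and $\log t\sim\log\log n_t$) and $\omega(n_t)=\pi(t)-O(1)\asymp t/\log t\asymp \log n_t/\log\log n_t$; invoking the prime number theorem instead one gets the sharper $\omega(n_t)\sim\log n_t/\log\log n_t$. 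Either way $f(n_t)=2^{\omega(n_t)}\geq\exp\!\big(c\,\tfrac{\log n_t}{\log\log n_t}\big)$ for some absolute $c>0$ and all large $t$ — the Wigert order of magnitude of the maximum of $\tau$.

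To conclude, fix $d\in(0,c)$; given large $x$, choose $t$ maximal with $n_t\leq x$, so that $n_t\to\infty$ as $x\to\infty$. Combining the two displayed inequalities yields an integer $y=y(x)\leq n_t\leq x$ with $|D(y)|\geq\tfrac12 f(n_t)\geq\tfrac12\exp\!\big(d\,\tfrac{\log n_t}{\log\log n_t}\big)\geq\tfrac12\exp\!\big(d\,\tfrac{\log y}{\log\log y}\big)$ once $x$ is large, the last step using that $u\mapsto\log u/\log\log u$ is eventually increasing together with $y\leq n_t$. Since $|D(y)|\to\infty$ forces $y\to\infty$, we obtain $\limsup_{x\to\infty}|D(x)|/\exp(d\log x/\log\log x)\geq\tfrac12>0$, which is exactly the asserted $\Omega$-bound. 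I do not anticipate a serious obstacle: the content is entirely in the observation that $f$ agrees with $\tau$ on the positive-density set of integers coprime to $M$; the only points requiring a little care are checking that condition iii really forces this identity (this is where the hypothesis on $f_1,f_2$ enters) and the routine bookkeeping in the last step with the slowly varying function $\log u/\log\log u$ and the choice $d<c$. One could presumably extract a quantitatively much stronger $\Omega$-bound, of order roughly $x^{1/4}$, from the analytic properties of the Dirichlet series of $f$ and mean-value estimates for $\zeta(s)$ on vertical lines, but the soft argument above already gives the stated result and is self-contained.
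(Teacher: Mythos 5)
Your proposal is correct and follows essentially the same route as the paper: both identify $f(n)=\tau(n)$ for $\gcd(n,m_1m_2)=1$, convert pointwise largeness of $f$ into largeness of the partial sums via $|f(x)|\leq|D(x-1)|+|D(x)|$, and invoke the maximal order $(1+o(1))\log n/\log\log n$ of $\omega(n)$. Your write-up is in fact a bit more careful than the paper's, since you explicitly construct the integers $n_t$ coprime to $m_1m_2$ with many prime factors rather than just citing the maximal order of $\omega$.
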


A key argument in the proof of the result above is that $f(n)=\tau(n)$ whenever $\gcd(n,m)=1$ for some $m$, where $\tau(n)$ is the divisor function: $\tau(n)=\sum_{d|n}1$. The omega result is then obtained by using classical estimates for the maximal value of $\tau$.

Our next question concerns upper bounds for the partial sums of $f=f_1\ast f_2$ as in Theorem \ref{teorema omega}. We begin by recalling the classical estimate for the partial sums of the divisor function:

\begin{equation}\label{equacao somas parciais da tau}
\sum_{n\leq x}\tau(n)=x\log x+(2\gamma-1)x+\Delta(x),
\end{equation}
where $\Delta(x)$ is an error term and $\gamma$ is the Euler-Mascheroni constant.

In the past 200 years there were a lot of attempts to obtain sharp estimates for the error term $\Delta(x)$. This is classically known as the \textit{Dirichlet divisor problem}, where one seeks to obtain estimates for the exponent 

\begin{equation}\label{equacao alpha}
\alpha:=\inf\{a>0:\Delta(x)=O_a(x^a)\},
\end{equation}
where the notation $O_a$ means that the implied constant may depend on the parameter $a$.

It is common knowledge that $\alpha\geq 1/4$ (Hardy \cite{hardydirichlet} and Landau, independently), but its exactly value is unknown. It is conjectured that $\alpha=1/4$. The best upper bound up to date is due to Huxley \cite{Huxley2003} (2003): $\alpha\leq 131/416 \approx 0.314$. For a nice historical account on this problem we refer to the book of Tenenbaum \cite{tenenbaumlivro}.

Before we state our next result we recall some classical notation. Here $\mu$ is the M\"obius function.

\begin{theorem}\label{teorema cota superior} Let $f_1$ and $f_2$ be two multiplicative functions satisfying conditions i-iii of Theorem \ref{teorema exemplos}, and let $m_1$ and $m_2$ be the periods of $f_1$ and $f_2$, respectively. Let $f=f_1\ast f_2$. Then, for all $x>m_1m_2$
$$\sum_{n\leq x}f(n) = \sum_{n|m_1m_2}f\ast\mu\ast\mu(n)\Delta\left(\frac{x}{n}\right).$$
\end{theorem}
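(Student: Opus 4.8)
The plan is to factor $f$ through the divisor function and then read off the identity from \eqref{equacao somas parciais da tau}. Since $\tau$ is the Dirichlet convolution of the constant function $1$ with itself, and the constant function $1$ has Dirichlet inverse $\mu$, the Dirichlet inverse of $\tau$ is $\mu\ast\mu$; hence, setting $g:=f\ast\mu\ast\mu$, we have $f=\tau\ast g$, and since $f=f_1\ast f_2$ we may also write $g=(f_1\ast\mu)\ast(f_2\ast\mu)$. The first step is to check that each $f_j\ast\mu$ is supported on the divisors of $m_j$: this function is multiplicative with $(f_j\ast\mu)(p^k)=f_j(p^k)-f_j(p^{k-1})$, which equals $1-1=0$ for every $k\geq1$ when $\gcd(p,m_j)=1$ by condition iii, and equals $0$ for every $k>a$ when $p^a\|m_j$ by condition ii. Consequently $g$ is supported on the divisors of $m_1m_2$, and in particular $g$ has finite support.

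Next I would unfold the partial sum. Writing $f=\tau\ast g$ and interchanging the order of summation,
\begin{equation*}
\sum_{n\leq x}f(n)=\sum_{e\leq x}g(e)\sum_{d\leq x/e}\tau(d)=\sum_{e\mid m_1m_2}g(e)\sum_{d\leq x/e}\tau(d),
\end{equation*}
where the last step uses that $g$ vanishes outside the divisors of $m_1m_2$ and that $x>m_1m_2$ makes the constraint $e\leq x$ automatic for every such $e$. Since $x>m_1m_2\geq e$ gives $x/e\geq1$, the estimate \eqref{equacao somas parciais da tau} applies; substituting it and using $\log(x/e)=\log x-\log e$ yields
\begin{equation*}
\sum_{n\leq x}f(n)=\bigl(x\log x+(2\gamma-1)x\bigr)\sum_{e\mid m_1m_2}\frac{g(e)}{e}-x\sum_{e\mid m_1m_2}\frac{g(e)\log e}{e}+\sum_{e\mid m_1m_2}g(e)\,\Delta\!\left(\frac{x}{e}\right).
\end{equation*}
The last sum is precisely the asserted term $\sum_{n\mid m_1m_2}(f\ast\mu\ast\mu)(n)\Delta(x/n)$, so it remains only to show that $A:=\sum_{e\mid m_1m_2}g(e)/e$ and $B:=\sum_{e\mid m_1m_2}g(e)\log e/e$ both vanish.

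For this I would use that $g=(f_1\ast\mu)\ast(f_2\ast\mu)$ is a finite sum, so splitting $e=ab$ and using $\log(ab)=\log a+\log b$ gives $A=A_1A_2$ and $B=B_1A_2+A_1B_2$, where $A_j:=\sum_{a\mid m_j}(f_j\ast\mu)(a)/a$ and $B_j:=\sum_{a\mid m_j}(f_j\ast\mu)(a)\log a/a$. Since $f_j\ast\mu$ is multiplicative with finite support, $A_j=\prod_{p\mid m_j}\bigl(\sum_{k\geq0}(f_j\ast\mu)(p^k)/p^k\bigr)$, and because the $p$-component of a Dirichlet convolution is the product of the $p$-components, the $p$-local factor equals $\bigl(\sum_{k\geq0}f_j(p^k)/p^k\bigr)\bigl(\sum_{k\geq0}\mu(p^k)/p^k\bigr)=\bigl(\sum_{k\geq0}f_j(p^k)/p^k\bigr)(1-1/p)$, the series $\sum_{k\geq0}f_j(p^k)/p^k$ being convergent since $f_j(p^k)$ is eventually constant in $k$ by condition ii. By condition i there is a prime $q_j\mid m_j$ with $\sum_{k\geq0}f_j(q_j^k)/q_j^k=0$, so this factor vanishes and hence $A_j=0$. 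Therefore $A=A_1A_2=0$ and $B=B_1A_2+A_1B_2=0$, which establishes the identity.

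The whole argument is mostly bookkeeping once the factorization $f=\tau\ast g$ and the finite support of $g$ are in place; the one genuinely substantive point is observing that the main terms $x\log x$ and $x$ of the divisor asymptotic are annihilated by the weight $g$ (equivalently, that $\sum_n g(n)n^{-s}$ has a zero of order at least two at $s=1$), and this is exactly what yields the clean formula. The only steps needing care are the legitimacy of the Dirichlet-series-at-$1$ manipulations (immediate, as $g$, $f_1\ast\mu$ and $f_2\ast\mu$ all have finite support), the short telescoping/Euler-factor identity, and the edge condition $x>m_1m_2$, which guarantees $x/e\geq1$ for each divisor $e$ of $m_1m_2$.
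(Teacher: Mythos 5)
Your proof is correct and follows essentially the same route as the paper: factor $f=\tau\ast g$ with $g=f\ast\mu\ast\mu=(f_1\ast\mu)\ast(f_2\ast\mu)$, use conditions ii--iii to see that $g$ is supported on the divisors of $m_1m_2$ (so the sum over $e\le x$ collapses to $e\mid m_1m_2$ once $x>m_1m_2$), expand via \eqref{equacao somas parciais da tau}, and kill the $x\log x$ and $x$ terms using condition i. The only point of divergence is local: the paper obtains $\sum_n g(n)/n=\sum_n g(n)\log n/n=0$ from the analyticity of $G(s)=F(s)/\zeta(s)^2$ and its double zero at $s=1$ (Lemma \ref{lemma propriedades da g}, part c), whereas you verify the same two identities by a direct finite Euler-product computation, writing $A=A_1A_2$ and $B=B_1A_2+A_1B_2$ and noting each local factor carries $\bigl(\sum_{k\ge0}f_j(p^k)p^{-k}\bigr)(1-1/p)$, which vanishes at $p=q_j$ by condition i; this is a slightly more elementary but equivalent verification of the same fact.
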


\begin{corollary}\label{corolario cota superior} Let $f$ be as in Theorem \ref{teorema cota superior} and $\alpha$ defined by \eqref{equacao alpha}. Then, for all $\epsilon>0$
$$\sum_{n\leq x}f(n)=O_\epsilon(x^{\alpha+\epsilon}).$$
\end{corollary}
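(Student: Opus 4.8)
The plan is to deduce Corollary \ref{corolario cota superior} directly from the exact identity in Theorem \ref{teorema cota superior} by estimating the finitely many terms on the right-hand side. First I would fix $\epsilon>0$ and write $N=m_1m_2$, so that
\begin{equation*}
\sum_{n\leq x}f(n) = \sum_{n\mid N} (f\ast\mu\ast\mu)(n)\,\Delta\!\left(\frac{x}{n}\right)
\end{equation*}
for all $x>N$. Since $N$ is a fixed integer, the sum ranges over the fixed finite set of divisors of $N$, and the coefficients $(f\ast\mu\ast\mu)(n)$ are fixed complex numbers independent of $x$; let $A=\sum_{n\mid N}|(f\ast\mu\ast\mu)(n)|<\infty$.

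Next I would invoke the definition of $\alpha$ in \eqref{equacao alpha}: for the chosen $\epsilon>0$, since $\alpha+\epsilon>\alpha$, the infimum in \eqref{equacao alpha} guarantees that $\Delta(t)=O_\epsilon(t^{\alpha+\epsilon})$, i.e.\ there is a constant $C_\epsilon>0$ with $|\Delta(t)|\leq C_\epsilon\, t^{\alpha+\epsilon}$ for all $t\geq 1$. Applying this with $t=x/n$ for each $n\mid N$ (note $x/n\geq x/N\to\infty$, so certainly $x/n\geq 1$ for $x$ large), and using $x/n\leq x$, gives $|\Delta(x/n)|\leq C_\epsilon (x/n)^{\alpha+\epsilon}\leq C_\epsilon\, x^{\alpha+\epsilon}$. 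Summing over the divisors of $N$ and using the triangle inequality yields
\begin{equation*}
\left|\sum_{n\leq x}f(n)\right| \leq \sum_{n\mid N}|(f\ast\mu\ast\mu)(n)|\,\bigl|\Delta(x/n)\bigr| \leq A\, C_\epsilon\, x^{\alpha+\epsilon},
\end{equation*}
which is exactly the assertion $\sum_{n\leq x}f(n)=O_\epsilon(x^{\alpha+\epsilon})$.

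There is essentially no obstacle here: the corollary is a formal consequence of the identity together with the definition of $\alpha$, and the only minor points to address carefully are that the divisor sum is finite and $x$-independent (so the implied constant depends only on $\epsilon$, on $\alpha$ through $C_\epsilon$, and on the fixed data $f_1,f_2,m_1,m_2$), and that one may absorb the factor $n^{-(\alpha+\epsilon)}\leq 1$ trivially. If one wanted the statement for all $x>N$ rather than just large $x$, one could note that the finitely many values of $\Delta(x/n)$ with bounded argument are themselves bounded, so no issue arises for moderate $x$ either.
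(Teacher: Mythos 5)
Your proposal is correct and follows exactly the paper's argument: the identity from Theorem \ref{teorema cota superior} expresses the partial sum as a finite, $x$-independent linear combination of the $\Delta(x/n)$, and the bound follows from the definition of $\alpha$ together with the triangle inequality. The paper states this in two sentences; you have simply written out the same reasoning in full detail.
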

In particular, by the result of Huxley:
$$\sum_{n\leq x}f(n)=O_\epsilon(x^{131/416+\epsilon}).$$

Thus, we have a considerable gap between our omega result (Theorem \ref{teorema omega}) and our upper bound above. Even for the simplest case $f_1(n)=f_2(n)=(-1)^{n+1}$ it seems to be hard to obtain sharp estimates for the partial sums of $f=f_1\ast f_2$. We speculate that $\sum_{n\leq x}f(n)=\Omega(x^{1/4})$, and in the final section of this paper we prove this omega bound for some particular cases of non-vanishing periodic multiplicative functions. We also discuss a possible approach to the general case.

\section{Proofs of the main results}

\subsection{Notation} We use both $f(x)\ll g(x)$ and $f(x)=O(g(x))$ whenever there exists a constant $C>0$ such that $|f(x)|\leq C|g(x)|$ for all  $x$ in a set of parameters. When not specified, this set of parameters will be the range in which $x$ is sufficiently large. Further, $\ll_\delta$ means that the implicit constant may depend on $\delta$. The standard $f(x)=o(g(x))$ means that $\lim_{x\to a}\frac{f(x)}{g(x)}=0$. Sometimes $a$ can be $\infty$. We write $\mathcal{P}$ for the set of primes and $p$ for a generic element of $\mathcal{P}$. The notation $p^k\| n$ means that $k$ is the largest power of $p$ for which $p^k$ divides $n$. Dirichlet convolution is denoted by $\ast$. 

\subsection{Proof of Theorems \ref{teorema condicao necessaria soma zero} and \ref{teorema exemplos}}
We begin with the following.
\begin{lemma}\label{lema f eh O(1)} If $f:\NN\to\CC$ is multiplicative and has bounded partial sums, then $\sup_n|f(n)|<\infty$ and for each $\epsilon>0$, there exists a $M>0$ such that if $p\geq M$, then $|f(p^k)|\leq 1+\epsilon$, for all $k\geq 1$.
\end{lemma}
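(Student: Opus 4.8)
The plan is to extract pointwise control on $f$ from the hypothesis that the partial sums $S(x):=\sum_{n\le x}f(n)$ are bounded, say $|S(x)|\le C$ for all $x\ge 1$. The first and most elementary observation is that for any prime power $p^k$, taking the difference $S(p^k)-S(p^k-1)=f(p^k)$ gives $|f(p^k)|\le 2C$ immediately; more generally $|f(n)|\le 2C$ for every $n$, since $f(n)=S(n)-S(n-1)$. That already yields $\sup_n|f(n)|<\infty$ with the crude bound $2C$, which disposes of the first assertion. The real content is the refinement that the bound at prime powers $p^k$ improves to $1+\epsilon$ once $p$ is large, uniformly in $k$.

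For the second assertion I would localize the partial sum to a short interval around a multiple of a large prime power. Fix a prime $p$ and an exponent $k\ge 1$, and write $N=p^k$. Consider $S(2N)-S(N)=\sum_{N<n\le 2N}f(n)$, which has absolute value at most $2C$. I want to isolate the single term $f(N)\cdot(\text{something})$ from this sum. The key is multiplicativity: the integers $n$ in $(N,2N]$ that are divisible by $N=p^k$ are exactly $n=N\cdot j$ with $1\le j\le 2$ — but that only gives one extra multiple, $2N$, so instead I should sum over a longer range. Better: for a parameter $y$, look at $\sum_{n\le yN,\ N\mid n} f(n)$. If we further restrict to $n=Nj$ with $\gcd(j,p)=1$, multiplicativity gives $f(Nj)=f(p^k)f(j)$ (when $\gcd(j,p)=1$), so $\sum_{j\le y,\ \gcd(j,p)=1} f(p^k)f(j) = f(p^k)\sum_{j\le y,\ \gcd(j,p)=1}f(j)$. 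The idea is then to compare $\sum_{n\le yN}f(n)\mathbbm{1}[N\mid n]$ — which is a difference of two bounded partial sums evaluated along the arithmetic progression $0\bmod N$, hence $O(C)$ — against $f(p^k)$ times a sum over $j$ that is bounded below, using that $f$ is ``mostly'' of size roughly $1$ on a positive-density set (this is where one needs to be careful, since a priori we have no lower bound on $|f|$ without more hypotheses).

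In fact the cleanest route, and the one I would actually carry out, avoids lower bounds entirely and uses only an averaging/telescoping argument on the progression $n\equiv 0\pmod{p^k}$ together with an induction on $k$. Writing $T_k(y):=\sum_{m\le y} f(p^k m)$, the bounded-partial-sums hypothesis applied along the progression $p^k\mid n$ shows $|\sum_{p^k\mid n\le x} f(n)|\le 2C$, i.e. $|T_k(x/p^k)|\le 2C$ for all $x$, hence $|T_k(y)|\le 2C$ for all $y\ge 1$. Now split $T_k(y)$ by whether $p\mid m$: $T_k(y)=f(p^k)\!\!\sum_{\substack{m\le y\\ p\nmid m}}\!\! f(m) + T_{k+1}(y/p)$. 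The second piece is $O(C)$ by the same bound at level $k+1$, so $f(p^k)\sum_{m\le y,\ p\nmid m}f(m)=O(C)$ uniformly in $y$ and $k$. Choosing $y$ so that $|\sum_{m\le y,\ p\nmid m}f(m)|$ is bounded below — here one uses that $S(y)=\sum_{m\le y}f(m)$ and the $p$-part $\sum_{p\mid m\le y}f(m)=f(p)\sum_{m\le y/p}f(m)+\cdots$ are both $O(C)$, while $S(y)\to$ a nonzero mean value because $f$ is $1$-pretentious by the hypothesis $\sum_p|1-f(p)|/p<\infty$ — forces $|f(p^k)|\le C'$ uniformly, and then a sharper version of this comparison (taking $y\to\infty$ along a subsequence where the relevant sum is $\asymp y$ in density terms, by Wirsing/Halász-type mean value results for $1$-pretentious $f$) upgrades $C'$ to $1+\epsilon$ for $p$ large. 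The main obstacle is precisely this last quantitative step: one must show the sum $\sum_{m\le y,\ p\nmid m}f(m)$ cannot be abnormally small for all $y$, which requires invoking a mean-value theorem for multiplicative functions pretending to be $1$, and tracking the dependence on $p$ so that the resulting constant tends to $1$ as $p\to\infty$ rather than merely staying bounded.
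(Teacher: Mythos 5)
Your proof of the first assertion is fine and is essentially the paper's: $f(n)=S(n)-S(n-1)$ gives $|f(n)|\le 2C$ for all $n$. The second assertion, however, is where your argument breaks down, and it does so in three places. First, the step $|T_k(x/p^k)|\le 2C$ is unjustified: boundedness of $\sum_{n\le x}f(n)$ gives no control whatsoever on $\sum_{n\le x,\ p^k\mid n}f(n)$; restricting a bounded sum to an arithmetic progression can make it unbounded (e.g.\ $f(n)=(-1)^{n+1}$ has bounded partial sums, yet $\sum_{n\le x,\,2\mid n}f(n)=-\lfloor x/2\rfloor$). The same objection applies to your claim that $\sum_{n\le yN}f(n)\mathbbm{1}[N\mid n]$ is ``a difference of two bounded partial sums.'' Second, you invoke the hypothesis $\sum_p|1-f(p)|/p<\infty$ and Hal\'asz/Wirsing-type mean-value theorems for $1$-pretentious functions, but the lemma assumes only multiplicativity and bounded partial sums; the pretentiousness hypothesis belongs to Theorem 1.1, not to this lemma, so it is not available here. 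Third, you yourself flag that the final quantitative step (a lower bound on $\sum_{m\le y,\ p\nmid m}f(m)$ with the resulting constant tending to $1$ as $p\to\infty$) is an unresolved obstacle, so even granting the earlier steps the argument is incomplete.

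The intended argument is far more elementary and needs nothing beyond the first assertion plus multiplicativity: if there were infinitely many \emph{distinct} primes $p_1,p_2,\dots$ and exponents $k_1,k_2,\dots$ with $|f(p_j^{k_j})|>1+\epsilon$, then for $n_l=p_1^{k_1}\cdots p_l^{k_l}$ multiplicativity gives $|f(n_l)|=\prod_{j\le l}|f(p_j^{k_j})|>(1+\epsilon)^l\to\infty$, contradicting $\sup_n|f(n)|<\infty$. Hence only finitely many primes $p$ admit some $k$ with $|f(p^k)|>1+\epsilon$, and one takes $M$ larger than all of them. No analytic or mean-value input is needed.
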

\begin{proof} Let $C>0$ be such that $\left|\sum_{n\leq x} f(n) \right|\leq C$ for all $x\geq 1$. Assume by contradiction that $f$ is not $O(1)$. Thus there exists a sequence of integers $x_k\to \infty$ such that $|f(x_k)|\to\infty$. Since
$$|f(x_k)|-\left|\sum_{n\leq x_k-1}f(n) \right|\leq \left|\sum_{n\leq x_k}f(n) \right|\leq C,$$
we obtain a contradiction for large $k$. Thus $f$ must be $O(1)$. Now if there are an infinite number of distinct primes $p_1,p_2,...$ such that for some powers $k_1,k_2,...$, $|f(p_j^{k_j})|>1+\epsilon$, then $|f(n_l)|$ become arbitrarily large for $n_l=p_1^{k_1}\cdot...\cdot p_l^{k_l}$, and thus $f$ is not $O(1)$.
\end{proof}
\begin{proof}[Proof of Theorem \ref{teorema condicao necessaria soma zero}] Assume that $f$ has bounded partial sums. Therefore, the Dirichlet series $F(s):=\sum_{n=1}^\infty\frac{f(n)}{n^s}$ is analytic in the half plane $Re(s)>0$. By Lemma \ref{lema f eh O(1)} above there exists a constant $C>0$ such that $|f(n)|\leq C$,  and hence, for $Re(s)>1$, $F(s)$ is given by the Euler product
$$F(s)=\prod_{p\in\mathcal{P}}\sum_{k=0}^\infty \frac{f(p^k)}{p^{ks}}.$$
Now we split the Euler product in primes below and above $M$, where $M$ is such that for all primes $p\geq M$, $|f(p^k)|\leq 1+\epsilon$ for all $k\geq 1$. For the tail product
we have that for $\sigma>1$
$$\frac{1}{\zeta(\sigma)}\prod_{p>M}\sum_{k=0}^\infty \frac{f(p^k)}{p^{k\sigma}}= \prod_{p\leq M}\left(1-\frac{1}{p^\sigma}\right)\prod_{p>M}\left(1+\frac{f(p)-1}{p^{\sigma}}+\frac{O(1)}{p^{2\sigma}} \right).$$
Therefore, since we assume that $\sum_{p}\frac{|1-f(p)|}{p}<\infty$, by making $\sigma\to 1^+$ above we conclude that the limit exists, and since $\zeta(\sigma)=\frac{1}{\sigma-1}+O(1)$, there exists a constant $c\in\CC\setminus\{0\}$ such that
$$\prod_{p>M}\sum_{k=0}^\infty \frac{f(p^k)}{p^{k\sigma}}=\frac{c+o(1)}{\sigma-1},$$
as $\sigma\to 1^+$. Thus, as $F$ is analytic at $s=1$, we conclude that as $\sigma\to1^+$, the finite product
$$\prod_{p\leq M}\sum_{k=0}^\infty \frac{f(p^k)}{p^{k\sigma}}=O(\sigma-1),$$
and hence
$$\prod_{p\leq M}\sum_{k=0}^\infty \frac{f(p^k)}{p^{k}}=0,$$
and this can happen only if some Euler factor equals to $0$.
\end{proof}
The proof of the next result follows the lines of Proposition 4.4 of \cite{klurmancorrelation}.
\begin{proof}[Proof of Theorem \ref{teorema exemplos}] Assume that $f:\NN\to\CC$ is multiplicative, has period $m$, $f(m)\neq 0$ and has bounded partial sums. Then for all $k\geq 1$, $f(km)=f(m)$. In particular, since $f(m)\neq 0$, for each $k$ coprime with $m$, $f(k)=1$. Now write $m$ as a power of distinct primes, say $p_1^{a_1},...,p_l^{a_l}$, where each $a_j\geq 1$. Since $f(m)\neq 0$, we obtain that each $f(p_j^{a_j})\neq 0$. Thus, by setting $k=p_j^t$, the equation $f(km)=f(m)$ implies that $f(p_j^{a_j+t})=f(p_j^{a_j})$. Thus we have shown that conditions ii-iii are satisfied. 

 Observe that, since $f$ has period $m$ and bounded partial sums, we have that $\sum_{n\leq m}f(n)=0$.

Now notice that if $\gcd(n,m)=d$, then $f(n)=f(d)$. This is because for each $p^a\|n$ such that $\gcd(p,m)=1$, we have that $f(p^a)=1$, and if $p^b\|m$ with $b\geq 1$, we have that $f(p^a)=f(p^c)$ where $c=\min(a,b)$. Thus we can write
\begin{align*}
\sum_{n\leq m}f(n)=\sum_{d|m}\sum_{\substack{n\leq m\\\gcd(n,m)=d}}f(n)=\sum_{d|m}f(d)\varphi(m/d)=f\ast \varphi(m),
\end{align*}
where $\varphi$ is the Euler's totient function. Since $f$ and $\varphi$ are multiplicative, we have that $f\ast \varphi$ is multiplicative. Recall that $\varphi(p^a)=p^a(1-1/p)$. Thus for each $p^a\|m$ with $a\geq 1$, we have that
\begin{align*}
f\ast\varphi(p^a)&=f(p^a)+f(p^{a-1})p\left(1-\frac{1}{p}\right)+f(p^{a-2})p^2\left(1-\frac{1}{p}\right)+...+p^a\left(1-\frac{1}{p}\right)\\
&=p^a\left(1-\frac{1}{p}\right)\left(\sum_{k=0}^{a-1}\frac{f(p^k)}{p^k}+\frac{f(p^a)}{p^a(1-1/p)} \right).
\end{align*}
But since $f(p^a)=f(p^k)$ for all $k\geq a$, we have that
$$\frac{f(p^a)}{p^a(1-1/p)}=\sum_{k=a}^{\infty}\frac{f(p^k)}{p^k}.$$
Thus, 
\begin{equation}\label{equacao calculo da soma no periodo}
\sum_{n\leq m}f(n)=\varphi(m)\prod_{p|m}\sum_{k=0}^{\infty}\frac{f(p^k)}{p^k},
\end{equation}
and hence condition i. must be satisfied.

Now assume conditions i-iii. Then as above, if $\gcd(a,m)=d$, then $f(a)=f(d)$, and if $n\equiv a \mod m$, then $\gcd(n,m)=\gcd(a,m)$, and hence $f$ has period $m$. Now with conditions ii-iii we can arrive at \eqref{equacao calculo da soma no periodo}, and with condition i. we conclude that $\sum_{n\leq m}f(n)=0$, and thus $f$ must have bounded partial sums. \end{proof}

\subsection{Proof of Theorems \ref{teorema omega} and \ref{teorema cota superior} }
\begin{lemma}\label{lemma propriedades da g} Let $f=f_1\ast f_2$ where $f_1$ and $f_2$ are multiplicative functions satisfying conditions i) ii) and iii)  of Theorem \ref{teorema exemplos}. Let $m_1$ and $m_2$ be the periods of $f_1$ and $f_2$ respectively. Then $f=g\ast \tau$, where $g$ satisifies the following properties.\\
a) $\sum_{n\leq x}|g(n)|=O_\epsilon(x^\epsilon)$, for all $\epsilon>0$;\\
b) If $\gcd(n,m_1m_2)=1$, then $g(n)=0$;\\
c) $\sum_{n=1}^\infty \frac{g(n)}{n}=\sum_{n=1}^\infty \frac{g(n)\log n}{n}=0$.
\end{lemma}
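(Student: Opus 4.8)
The plan is to take $g:=f\ast\mu\ast\mu$. Since $\tau=\ind\ast\ind$, where $\ind$ denotes the arithmetic function identically equal to $1$, and since $\mu$ is the Dirichlet inverse of $\ind$, the function $\mu\ast\mu$ is the Dirichlet inverse of $\tau$; hence $g\ast\tau=f$ holds automatically, and it remains only to verify a)--c) for this particular $g$. The convenient reformulation is
\[
g=(f_1\ast\mu)\ast(f_2\ast\mu),
\]
where each factor $f_j\ast\mu$ is multiplicative, so $g$ is multiplicative.

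The first step is to pin down the support of $g$. For a prime $p$ and an exponent $k\geq 1$ we have $(f_j\ast\mu)(p^k)=f_j(p^k)-f_j(p^{k-1})$, because $\mu(p^i)=0$ for $i\geq 2$. If $p\nmid m_j$, then condition iii) of Theorem \ref{teorema exemplos} gives $f_j(p^k)=f_j(p^{k-1})=1$, so $(f_j\ast\mu)(p^k)=0$ for every $k\geq 1$; and if $p^a\|m_j$ with $a\geq 1$, then condition ii) gives $f_j(p^k)=f_j(p^{k-1})=f_j(p^a)$ for every $k>a$, so again $(f_j\ast\mu)(p^k)=0$. Therefore $f_j\ast\mu$ is a multiplicative function supported on the finite set of divisors of $m_j$, and consequently $g$ is supported on the divisors of $m_1m_2$. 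Property a) is then immediate: $\sum_{n\leq x}|g(n)|\leq\sum_{n\mid m_1m_2}|g(n)|=O(1)$, which is $O_\epsilon(x^\epsilon)$. Property b) follows as well, since an integer $n>1$ coprime to $m_1m_2$ cannot divide $m_1m_2$ (here $g(1)=1$, so b) is to be read as: $g$ is supported on the divisors of $m_1m_2$).

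For c) I would argue with Dirichlet series. Put $G(s):=\sum_{n\geq 1}g(n)n^{-s}=\sum_{n\mid m_1m_2}g(n)n^{-s}$; this is a finite Dirichlet polynomial, hence an entire function of $s$. Writing $F_j(s):=\sum_{n\geq 1}f_j(n)n^{-s}$, the Euler products give $G(s)=F_1(s)F_2(s)\zeta(s)^{-2}$ for $\operatorname{Re}(s)>1$, equivalently $F_1(s)F_2(s)=G(s)\zeta(s)^2$. Since each $f_j$ has bounded partial sums, each $F_j$ is holomorphic on $\operatorname{Re}(s)>0$ (as recalled in the proof of Theorem \ref{teorema condicao necessaria soma zero}), so $F_1F_2$ is holomorphic at $s=1$; by analytic continuation the identity $F_1(s)F_2(s)=G(s)\zeta(s)^2$ persists on $\operatorname{Re}(s)>0$. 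As $\zeta(s)^2$ has a double pole at $s=1$ while the left-hand side is finite there, $G$ must vanish to order at least $2$ at $s=1$, i.e. $G(1)=G'(1)=0$. Because $G$ is a finite sum we may differentiate term by term: $G(1)=\sum_n g(n)/n$ and $G'(1)=-\sum_n g(n)\log n/n$, and c) follows.

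The only point that really needs care is the support computation in the second step; it is precisely there that hypotheses ii) and iii) are used, forcing $f_j\ast\mu$, and hence $g$, to be finitely supported. Once that is established, a) and b) are trivial and c) is a routine consequence of the double zero of $\zeta(s)^{-2}$ at $s=1$ together with the analyticity of the $F_j$ on $\operatorname{Re}(s)>0$, so I do not anticipate any further obstacle.
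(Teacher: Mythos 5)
Your proof is correct, and while it starts from the same decomposition $g=f\ast\mu\ast\mu=(f_1\ast\mu)\ast(f_2\ast\mu)$ as the paper, it diverges in two useful ways. For a) and b), you compute the local factors directly and show that conditions ii) and iii) force $f_j\ast\mu$ to be supported on the divisors of $m_j$, hence $g$ on the divisors of $m_1m_2$; this gives the stronger conclusion $\sum_{n\le x}|g(n)|=O(1)$. The paper proves a) more softly, via absolute convergence of $G(s)=F(s)\zeta(s)^{-2}$ in $\operatorname{Re}(s)>0$ together with Kronecker's lemma, and only establishes the finite support of $g$ later, inside the proof of Theorem \ref{teorema cota superior} (where it is needed to kill the main terms); you have effectively front-loaded that computation, which streamlines the later argument. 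For c), the paper uses hypothesis i) directly: the Euler factors at the primes $q_1\mid m_1$ and $q_2\mid m_2$ each vanish at $s=1$, so $G(s)=O(|s-1|^2)$. You instead play the double pole of $\zeta(s)^2$ at $s=1$ against the analyticity of $F_1F_2$ there, which rests on the $f_j$ having bounded partial sums; that is legitimate since the converse direction of Theorem \ref{teorema exemplos} (proved earlier) shows that conditions i)--iii) imply bounded partial sums, so condition i) still enters, just indirectly. You were also right to flag that b) must exclude $n=1$ (since $g(1)=1$); the paper uses b) only through the identity $f(n)=\tau(n)$ for $\gcd(n,m_1m_2)=1$, which is unaffected.
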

\begin{proof} Let $Re(s)>1$. By the classical identity for the Dirichlet series of a convolution and the Euler product formula, we have that 
$$F(s):=\sum_{n=1}^\infty \frac{f_1\ast f_2(n)}{n^s}=\prod_{p\in\mathcal{P}}\left(\sum_{k=0}^\infty \frac{f_1(p^k)}{p^{ks}}\right)\left(\sum_{k=0}^\infty \frac{f_2(p^k)}{p^{ks}}\right).$$
Now, by assumption iii., if $\gcd(p,m_1m_2)=1$, then $f_1(p^k)=f_2(p^k)=1$ for all powers $k\geq 1$. Therefore
$$F(s)=\prod_{p|m_1m_2}\left(\sum_{k=0}^\infty \frac{f_1(p^k)}{p^{ks}}\right)\left(\sum_{k=0}^\infty \frac{f_2(p^k)}{p^{ks}}\right)\prod_{\substack{p\in\mathcal{P}\\ \gcd(p,m_1m_2)=1 } }\left(1-\frac{1}{p^s}\right)^{-2},$$
and hence
\begin{equation}\label{equacao formula para G}
G(s):=\frac{F(s)}{\zeta(s)^2}=\prod_{p|m_1m_2}\left(\sum_{k=0}^\infty \frac{f_1(p^k)}{p^{ks}}\right)\left(\sum_{k=0}^\infty \frac{f_2(p^k)}{p^{ks}}\right)\left(1-\frac{1}{p^s}\right)^{2}.
\end{equation}
Recall that $\zeta(s)^2$ is the Dirichlet series of $\tau = 1\ast 1$. Thus, $G(s)$ is the Dirichlet series of $g:=f\ast \tau^{-1}=f\ast(\mu\ast\mu)$, where $\mu$ is the classical M\"obius function. Therefore, by the Euler product formula for $G(s)$ above, we have that condition b) must be satisfied. Since $f_1$ and $f_2$ are $O(1)$, we have that there exists a constant $c>0$, such that for all primes $p$ and all powers $k\geq 1$, $|g(p^k)|\leq ck$. This implies that for each $\sigma>0$
$$\sum_{p|m_1m_2}\sum_{k=1}^\infty \frac{|g(p^k)|}{p^{k\sigma}}<\infty,$$
and hence, by a classical result for Dirichlet series (see for instance \cite{tenenbaumlivro}, pg. 188, Theorem 1.3), $G(s)=\sum_{n=1}^\infty \frac{g(n)}{n^s}$ converges absolutely in the half plane $Re(s)>0$ and is given by \eqref{equacao formula para G} for each $s$ in this half plane. In particular, for each $\epsilon>0$, $\sum_{n=1}^\infty\frac{|g(n)|}{n^\epsilon}<\infty$, and hence, by Kroenecker's Lemma (see for instance \cite{shiryaev}, pg. 390, Lemma 2), we have that condition a) is satisfied. Finally, by assumption i., there are primes $q_1|m_1$ and $q_2|m_2$ such that
$$\sum_{k=0}^\infty \frac{f_1(q_1^k)}{q_1^{k}}=\sum_{k=0}^\infty \frac{f_2(q_2^k)}{q_2^{k}}=0.$$
Hence, by analyticity
$$\sum_{k=0}^\infty \frac{f_1(q_1^k)}{q_1^{ks}}=O(|s-1|),\;\sum_{k=0}^\infty \frac{f_2(q_2^k)}{q_2^{ks}}=O(|s-1|),$$
for all $s$ sufficiently close to $1$. This combined with \eqref{equacao formula para G} gives that $G(s)=O(|s-1|^2)$, for all $s$ sufficiently close to $1$, and since $G$ is analytic, we have that $G(1)=G'(1)=0$. But
$G(1)=\sum_{n=1}^\infty \frac{g(n)}{n}$ and $G'(1)=-\sum_{n=1}^\infty \frac{g(n)\log n}{n}$, and this completes the proof.
\end{proof}

\begin{proof}[Proof of Theorem \ref{teorema omega}] By the triangle inequality we have that for each positive integer $x$,
$$|f(x)|\leq \left|\sum_{n\leq x -1}f(n) \right|+\left|\sum_{n\leq x }f(n) \right|.$$
Therefore, by the pigeonhole principle, we have that at least one of the two sums in the right-hand side above is at least  $|f(x)|/2$. By Lemma \ref{lemma propriedades da g}, we have that for each $\gcd(n,m_1m_2)=1$, $f(n)=\tau(n)\geq 2^{\omega(n)}$, where $\omega(n)=\sum_{p|n}1$. Since $\omega(n)$ can be as large as $(1+o(1))\log n /\log\log n$ (see for instance \cite{tenenbaumlivro} pg. 113, Theorem 5.4), we complete the proof.
\end{proof}

\begin{proof}[Proof of Theorem \ref{teorema cota superior}] By Lemma \ref{lemma propriedades da g}, we have that $f=g\ast \tau$. This combined with \eqref{equacao somas parciais da tau} gives that
\begin{align*}
\sum_{n\leq x}f(n) &= \sum_{n\leq x}g(n)\sum_{m\leq x/n}\tau(m)=\sum_{n\leq x}g(n)\left(\frac{x}{n}\log (x/n)+(2\gamma-1)\frac{x}{n}+\Delta(x/n)\right)\\
&=x\log x\sum_{n\leq x}\frac{g(n)}{n}-x\sum_{n\leq x}\frac{g(n)\log n}{n}+(2\gamma-1)x\sum_{n\leq x}\frac{g(n)}{n}+\sum_{n\leq x}g(n)\Delta(x/n).
\end{align*}

We will show that each of the first three sums in the right hand side above vanishes for $x>m_1m_2$. This is the moment when we will use the condition ii) of Theorem \ref{teorema exemplos}. The condition states that if $p^{k_j}\|m_j$ for $j=1,2$, then $f_j(p^{t_j})=f_j(p^{k_j})$ for all $t_j\geq k_j$. Here we allow that $k_j=0$. Thus $f_j\ast\mu(p^{t_j})=f_j(p^{t_j})-f_j(p^{t_j-1})=0$, for all $t_j\geq k_j+1$. Since $g=(f_1\ast\mu)\ast(f_2\ast\mu)$, we have that $g(p^{t})=0$ for all $t\geq k_1+k_2+1$. This can be easily seen by the fact that each Euler factor in the Euler product representation of $\sum_{n=1}^{\infty}\frac{g(n)}{n^s}$ has the form 
$$\sum_{k=0}^{k_1}\frac{f_1\ast\mu(p^k)}{p^{ks}}\sum_{l=0}^{k_2}\frac{f_2\ast\mu(p^l)}{p^{ls}}.$$
Therefore, by b) of Lemma \ref{lemma propriedades da g}, if $n>m_1m_2$, we have that $g(n)=0$, and by c) of the same Lemma  we obtain the desired claim. 
\end{proof}

\begin{remark} We observe that without ii. of Theorem \ref{teorema exemplos}, we could prove a slighty more general statement but with weaker conclusions in comparison with the one obtained in Theorem \ref{teorema cota superior}. Indeed, this was done in a preprint version of this paper (see arXiv:2110.03401, v3). There we show that 
$$\sum_{n\leq x}f_1\ast f_2(n)=\sum_{n\leq x}g(n)\Delta(x/n)+O_\epsilon(x^{\epsilon}),$$
for any $\epsilon>0$. To establish this, one should impose a growth condition on each $f_j$ so that one could prove a) from Lemma \ref{lemma propriedades da g}.
\end{remark}

\begin{proof}[Proof of Corollary \ref{corolario cota superior}] By Theorem \ref{teorema cota superior}, we have that $\sum_{n\leq x}f(n)$ can be expressed as a finite linear combination of the functions $(\Delta(x/n))_n$. The proof is then an immediate consequence of the triangle inequality.
\end{proof}


\section{Discussion on $\Omega$ bounds for $f_1\ast f_2$}
As we point out in the introduction, there is a considerable gap between our omega bound (Theorem \ref{teorema omega}) and our upper bound (Corollary \ref{corolario cota superior}). Here we propose an approach to prove our conjecture that $\sum_{n\leq x}f_1\ast f_2(n)=\Omega(x^{1/4})$, where each $f_j$ is periodic with bounded partial sums that satisfy i., ii. and iii. of Theorem \ref{teorema exemplos}.

A nice result proved by Tong \cite{tong} states that
$$\int_{1}^X\Delta(x)^2dx=(A+o(1))X^{3/2},$$
where $A$ is the constant given by

\begin{equation}\label{equation tong}
A=\frac{1}{6\pi^2}\sum_{n=1}^\infty\frac{\tau(n)^2}{n^{3/2}}.
\end{equation}
Observe that if a function $\lambda(x)$ is $o(x^{1/4})$, then $\int_1^X|\lambda(x)|^2dx=o(X^{3/2})$. Therefore, this combined with Tong's result gives a second proof (different from Hardy's) that $\Delta(x)=\Omega(x^{1/4})$, and also allows us to prove the following result.

\begin{proposition} Let $q$ be a prime number and $f$ be the unique $q$-periodic multiplicative function with bounded partial sums, and such that $f(q)\neq 0$. Assume that $q\geq 5$. Then
$$\sum_{n\leq x}f\ast f(n)=\Omega(x^{1/4}).$$
\end{proposition}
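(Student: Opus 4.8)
The plan is to combine Theorem~\ref{teorema cota superior} with Tong's mean-square formula for $\Delta$. First I would apply Theorem~\ref{teorema cota superior} with $f_1=f_2=f$: since $f$ is $q$-periodic with $f(q)\neq 0$, its period is $m_1=m_2=q$, and conditions i--iii hold by Theorem~\ref{teorema exemplos}. Writing $g=f\ast\mu\ast\mu$, the theorem gives, for $x>q^2$,
\begin{equation*}
\sum_{n\leq x}f\ast f(n)=\sum_{n|q^2}g(q\cdot)\text{-coefficients}\cdot\Delta(x/n)=g(1)\Delta(x)+g(q)\Delta(x/q)+g(q^2)\Delta(x/q^2),
\end{equation*}
a \emph{finite} linear combination $S(x):=\sum_{j=0}^{2}c_j\Delta(x/q^j)$ with $c_0=g(1)=1$. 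The strategy is then a second-moment argument: estimate $\int_1^X S(x)^2\,dx$ from below, using Tong's theorem $\int_1^X\Delta(x)^2\,dx=(A+o(1))X^{3/2}$ for the diagonal terms, and bound the cross terms $\int_1^X\Delta(x/q^i)\Delta(x/q^j)\,dx$ (for $i\neq j$) by Cauchy--Schwarz. If $\int_1^X S(x)^2\,dx\gg X^{3/2}$, then $S(x)$ cannot be $o(x^{1/4})$, which is exactly the claimed $\Omega(x^{1/4})$.

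The key computation is the leading constant of $\int_1^X S(x)^2\,dx$. By the change of variables $x\mapsto q^j x$ one gets $\int_1^X\Delta(x/q^j)^2\,dx=(A+o(1))q^{-3j/2}X^{3/2}$, and for the cross terms Cauchy--Schwarz yields $\big|\int_1^X\Delta(x/q^i)\Delta(x/q^j)\,dx\big|\leq(A+o(1))q^{-3(i+j)/4}X^{3/2}$. Hence
\begin{equation*}
\int_1^X S(x)^2\,dx\geq (A+o(1))X^{3/2}\Big(\sum_{j=0}^2 c_j^2 q^{-3j/2}-\sum_{i\neq j}|c_i||c_j|q^{-3(i+j)/4}\Big)=(A+o(1))X^{3/2}\Big(\sum_{j=0}^2 |c_j| q^{-3j/4}\Big)\cdot(\text{something})
\end{equation*}
— more precisely the bracket is bounded below by $\big(|c_0|-|c_1|q^{-3/4}-|c_2|q^{-3/2}\big)^2$ after completing the square in the variables $|c_j|q^{-3j/4}$. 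So it suffices to show this is strictly positive, i.e. $|g(1)|>|g(q)|q^{-3/4}+|g(q^2)|q^{-3/2}$. Here I would compute $g(q)$ and $g(q^2)$ explicitly from the Euler factor formula \eqref{equacao formula para G}: with $a=f(q)$ and $b=f(q^k)$ ($k\geq 2$, which by condition ii is constant), the local factor is $\big(\sum_{k\geq 0}f(q^k)q^{-ks}\big)^2(1-q^{-s})^2$, and since $\sum_{k\geq0}f(q^k)/q^k=0$ (condition i, which here also determines $f$ uniquely as stated), one has $b/(q^2(1-1/q))=-(1+a/q)$, so $b=-q(q-1)(1+a/q)=-(q-1)(q+a)$; the coefficients $g(1),g(q),g(q^2)$ are then low-degree polynomials in $a$ and $q$, and $g(1)=1$.

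The main obstacle is verifying the strict inequality $1>|g(q)|q^{-3/4}+|g(q^2)|q^{-3/2}$ for all primes $q\geq 5$. Since $g(q)$ and $g(q^2)$ grow polynomially in $q$ (roughly $g(q)$ is linear and $g(q^2)$ is quadratic in $q$, as $b\sim -q^2$), the terms $|g(q)|q^{-3/4}$ and $|g(q^2)|q^{-3/2}$ are of sizes $\sim q^{1/4}$ and $\sim q^{1/2}$ respectively, which \emph{grow} with $q$ — so a naive bound fails and this is exactly where the hypothesis $q\geq 5$ (rather than $q\geq 2$) and the precise algebraic cancellation in $g$ must be exploited. I expect that the correct move is \emph{not} to use the crude Cauchy--Schwarz bound on the cross terms but to evaluate $\lim_{X\to\infty}X^{-3/2}\int_1^X\Delta(x/q^i)\Delta(x/q^j)\,dx$ exactly (this limit exists and equals $A\,q^{-3\max(i,j)/4}\cdot c_{ij}$ for an explicit $c_{ij}\in(0,1)$ coming from the overlap of the Voronoï-type expansions of $\Delta$), so that the bracket becomes a genuine quadratic form in the $c_j$ whose positivity can be checked; alternatively, one rescales so that the combination $\sum c_j\Delta(x/q^j)$ is itself compared against a single dominant term $\Delta(x)$ on the dyadic range where $x$ is just above $q^2$. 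Making one of these two refinements rigorous, and then discharging the resulting finite verification for small $q\geq 5$ together with an asymptotic argument for large $q$, is the crux; everything else is bookkeeping with Theorem~\ref{teorema cota superior}, Theorem~\ref{teorema exemplos}, and Tong's theorem.
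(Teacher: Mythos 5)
Your reduction is the paper's: Theorem~\ref{teorema cota superior} turns the sum into the finite combination $S(x)=\Delta(x)+g(q)\Delta(x/q)+g(q^2)\Delta(x/q^2)$ (with the explicit values $f(q)=-(q-1)$, so the local factor of $g$ at $q$ is $\left(1-q\cdot q^{-s}\right)^2$ and $g(q)=-2q$, $g(q^2)=q^2$), and Tong's theorem is the right engine. But the crux --- the positivity verification --- is left open in your write-up, and the repairs you propose (exact asymptotics for the cross-correlations $\int_1^X\Delta(x/q^i)\Delta(x/q^j)\,dx$, or a dyadic comparison) are not needed and would be genuinely hard; the cross-correlation program is essentially the paper's stated \emph{open} approach to the general conjecture, not the proof of this proposition. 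The obstacle you hit is an artifact of anchoring the lower bound on the wrong term: you try to show the $\Delta(x)$ term dominates, which fails because $|g(q)|$ and $|g(q^2)|$ grow with $q$.

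The fix is to anchor on the \emph{last} term. First correct the scaling: by $u=x/n$ one has $\int_1^X\Delta(x/n)^2\,dx=(A+o(1))\,n^{-1/2}X^{3/2}$ (you dropped the Jacobian factor $n$, writing $n^{-3/2}$), so $\|\Delta(x/q^j)\|_{L^2[1,X]}=\sqrt{A+o(1)}\,q^{-j/4}X^{3/4}$. The three terms of $S$ then have $L^2[1,X]$ norms $\sqrt{A+o(1)}X^{3/4}$ times $1$, $2q^{3/4}$ and $q^{3/2}$ respectively, and the reverse triangle (Minkowski) inequality gives
$$\|S\|_{L^2[1,X]}\;\geq\; q^2\|\Delta(x/q^2)\|-2q\|\Delta(x/q)\|-\|\Delta(x)\|\;=\;\sqrt{A+o(1)}\,X^{3/4}\left(q^{3/2}-2q^{3/4}-1\right),$$
with no cross-term analysis at all. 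The polynomial $\lambda(q)=q^{3/2}-2q^{3/4}-1$ is increasing and positive exactly from $q=5$ on (it is negative for $q\in\{2,3\}$), which is where the hypothesis $q\geq 5$ enters. Since $\|S\|_{L^2[1,X]}\gg X^{3/4}$ is incompatible with $S(x)=o(x^{1/4})$, this closes the proof. Without this reordering your argument does not terminate.
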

\begin{proof} By Theorem \ref{teorema exemplos} we have that $f(q^k)=f(q)$ for all $k\geq 1$. Then, condition i. determines uniquely the value of $f(q)$:
$$1+f(q)\sum_{k=1}^\infty\frac{1}{q^k}=0.$$
Therefore, $f(q)=-(q-1)$. Now, for $k\geq 1$, by condition iii. of Theorem \ref{teorema exemplos}, we have that $f\ast\mu(p^k)$ is $0$ unless $p=q$. In this case we have that
$f\ast\mu(q^k)$ is: $0$ if $k\geq 2$, and $f(q)-1=-q$, if $k=1$. The Euler factor corresponding to $q$ in the Euler product representation of the Dirichlet series of $g:=(f\ast\mu)\ast(f\ast\mu)$ is
$$\left(1-\frac{q}{q^s}  \right)^{2}=1-\frac{2q}{q^s}+\frac{q^2}{q^{2s}}.$$
This immediately implies that $g(q)=-2q$ and $g(q^2)=q^2$. Hence, by Theorem \ref{teorema cota superior}, for $x>q^2$
$$\sum_{n\leq x}f\ast f(n)=\Delta(x)-2q\Delta(x/q)+q^2\Delta(x/q^2).$$
Now, by combining Tong's result \eqref{equation tong} with a simple change of variables, for any $n>0$ we obtain that
$$\|\Delta(x/n)\|_{L^2[1,X]}:=\left(\int_1^X \Delta(x/n)^2dx  \right)^{1/2}=\frac{\sqrt{A+o(1)}}{n^{1/4}}X^{3/4}.$$
The Cauchy-Schwarz and Minkowski inequalities for $L^2$ spaces imply that
\begin{align*}
\|\Delta(x)-2q\Delta(x/q)&+q^2\Delta(x/q^2)\|_{L^2[1,X]}\\
&\geq q^2\|\Delta(x/q^2)\|_{L^2[1,X]}-2q\|\Delta(x/q)\|_{L^2[1,X]}-\|\Delta(x)\|_{L^2[1,X]}\\
&\geq\sqrt{A+o(1)}X^{3/4}\left(\frac{q^2}{q^{2/4}}-\frac{2q}{q^{1/4}}-1 \right)\\
&\geq\sqrt{A+o(1)}X^{3/4}\left(q^{3/2}-2q^{3/4}-1 \right).\\
\end{align*} 
With standard calculus we can check that the function $\lambda(q):=q^{3/2}-2q^{3/4}-1$ is increasing for $q\geq 1$, and since $\lambda(5)=3.4929$, we have that $\lambda(q)>0$ for all $q\geq 5$. This shows that in this range of $q$,
$$\bigg{\|}\sum_{n\leq x}f\ast f(n)\bigg{\|}_{L^2[1,X]}\geq\sqrt{A+o(1)}\lambda(q)X^{3/4},$$
which gives the desired omega bound. \end{proof}

In our proof above, we see that this method does not work in the case that $q\in\{2,3\}$, since in these cases $\lambda(q)<0$. In particular, in the case $q=2$, the associated periodic multiplicative function is the classical $f(n)=(-1)^{n+1}$. For this particular case, in the next figure we plot the partial sums of $f\ast f$, and the numerics are in agreement with our conjecture. 

\begin{figure}[h]
\includegraphics[scale=0.4]{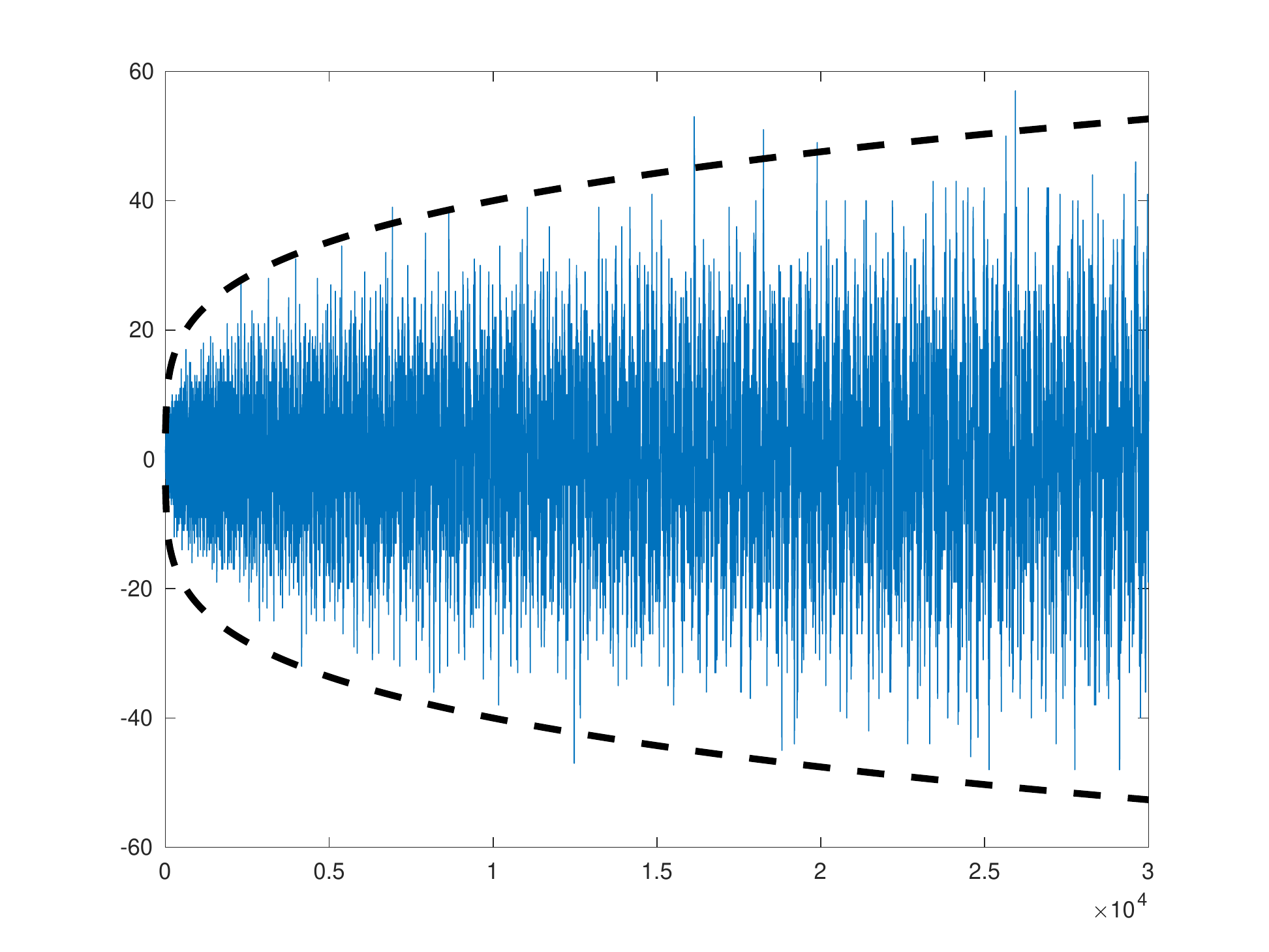} 
\caption{The dashed curves are given by $x\mapsto\pm 4x^{1/4}$, and the continuous line is given by $x\mapsto\sum_{n\leq x}f\ast f(n)$, where $f(n)=(-1)^{n+1}$.}
\end{figure}

We conclude by mentioning a possible approach to the conjectured omega bound. Since $\sum_{n\leq x}f_1\ast f_2(n)$ can be expressed as $\sum_{n\leq T}c_n\Delta(x/n)$, where $T$ is a positive integer and $c_n$ are complex numbers, one could approach the conjectured omega bound by studying the quadratic form obtained from the squared $L^2[1,X]$ norm of $\sum_{n\leq T}c_n\Delta(x/n)$. The conjectured omega bound would follow if, for instance, one could prove that the eigenvalues of the symmetric matrix $(a_{n,m})_{n,m\leq T}$ are positive,
where
$$a_{n,m}=\lim_{X\to\infty}\frac{1}{X^{3/2}}\int_{1}^X\Delta(x/n)\Delta(x/m)dx.$$
To prove this, our preliminary calculations show that, by using the classical Vorono\"i's formula for $\Delta(x)$ (see Lemma 1 of \cite{lautsang}), firstly we need to understand the effect of positive integers $a$ and $b$ in the correlations
$$\sum_{n\leq x}\tau(an)\tau(bn).$$

\noindent\textbf{Acknowledgements.}  I am thankful to Oleksiy Klurman for a fruitful discussion that resulted in this work. This was in February of 2020 while I was visisting MPIM (Bonn), for which I am also thankful for its warm hospitality. Also, I would like to thank the anonymous referee for several remarks and corrections that improved the exposition of this paper, and to my students Caio Bueno and Kevin Medeiros for their comments on a revised version of it. I am supported by CNPq - grant Universal number 403037/2021-2.

{\small{\sc \noindent
Departamento de Matem\'atica, Universidade Federal de Minas Gerais, Av. Ant\^onio Carlos, 6627, CEP 31270-901, Belo Horizonte, MG, Brazil.} \\
\textit{Email address:} aymone.marco@gmail.com}
\vspace{0.5cm}

\end{document}